\documentclass[12pt]{article}

 \topmargin 0.2cm        
 \oddsidemargin 1cm   
 \evensidemargin 1cm
 \textwidth 13.5cm
 \textheight 22cm 

\usepackage{setspace}
\usepackage{amsfonts}
\usepackage{amsmath}
\usepackage{amssymb}
\usepackage[utf8]{inputenc}
\usepackage[T1]{fontenc}
\usepackage {eurosym}                                
\usepackage {color}                                  
\usepackage {pifont}                                 
\usepackage{setspace}
\usepackage {multicol}                               
\usepackage {times}
\usepackage{latexsym}
\usepackage{tabularx}
\usepackage{verbatim}

\usepackage{makeidx}
\usepackage{color}
\usepackage [scaled=.90]{helvet}                     
\usepackage {courier}                                
\usepackage {graphicx}                               
\usepackage {array}                                  
\usepackage {longtable}                              
\makeindex
\setlength{\parindent}{1cm}
\usepackage{fancyvrb}

\newtheorem{theorem}{Theorem}
\newtheorem{algorithm}{Algorithm}
\newtheorem{lemma}{Lemma}
\newtheorem{coro}{Corollary}
\newtheorem{defi}{Definition}
\newtheorem{remark}{Remark}

\newcommand{\NN}{{\mathbb N}}

\newcommand{\ZZ}{{\mathbb Z}}

\newcommand{\FF}{{\mathbb F}}
\newcommand{\bsalpha}{{\boldsymbol \alpha}}
\newcommand{\bsh}{{\boldsymbol h}}

\newcommand{\bk}{{\bf k}}
\newcommand{\bb}{{\bf b}}
\newcommand{\bc}{{\bf c}}
\newcommand{\bh}{{\bf h}}
\newcommand{\by}{{\bf y}}
\newcommand{\bszero}{{\boldsymbol 0}}
\newcommand{\bigO}{O}
\newenvironment{proof}{\begin{trivlist}
    \item[\hskip\labelsep{\it Proof.}]}{$\hfill\Box$\end{trivlist}}
\frenchspacing
\title{Vandermonde Nets}

\author{Roswitha Hofer and Harald Niederreiter}

\date{}

\begin{document}

\setcounter{page}{1}
\maketitle

\begin{abstract}
The second author recently suggested to identify the generating matrices of a digital $(t,m,s)$-net over the finite field $\FF_q$ with an $s \times m$ 
matrix $C$ over $\FF_{q^m}$. More exactly, the entries of $C$ are determined by interpreting the rows of the generating matrices as elements of 
$\FF_{q^m}$. This paper introduces so-called Vandermonde nets, which correspond to Vandermonde-type matrices $C$, and discusses the quality parameter 
and the discrepancy of such nets. The methods that have been successfully used for the investigation of polynomial lattice point sets and hyperplane nets 
are applied to this new class of digital nets. In this way, existence results for small quality parameters and good discrepancy bounds are obtained. 
Furthermore, a first step towards component-by-component constructions is made. A novelty of this new class of nets is that explicit constructions of 
Vandermonde nets over $\FF_q$ in dimensions $s\leq q+1$ with best possible quality parameter can be given. So far, good explicit constructions of the competing 
polynomial lattice point sets are known only in dimensions $s\leq 2$. 
\end{abstract}

\noindent{\textbf{Keywords:} Digital Net, Discrepancy.}\\

\noindent{\textbf{MSC2010:} 11K31, 11K38.}

\section{Introduction and basic definitions} \label{sein}

In this paper, we introduce a new family of digital nets over finite fields. A net, or more precisely a $(t,m,s)$-net, is a finite collection of
points (also called a point set) in the $s$-dimensional half-open unit cube $[0,1)^s$ possessing equidistribution properties. A digital net is a
net obtained by the linear-algebra construction described below. Various constructions of nets are already known, and most of them are digital nets.
Reviews of the theory of nets can be found in the monograph~\cite{DP10} and in the recent survey article~\cite{N13}.
 
Let $\FF_q$ be the finite field of order $q$, where $q$ is an arbitrary prime power, and let $m$ and $s$ be positive integers. In order to construct
a digital $(t,m,s)$-net over $\FF_q$, we choose $m \times m$ matrices $C^{(1)},\ldots,C^{(s)}$ over $\FF_q$, called the \emph{generating matrices}
of the digital net. We write $\ZZ_q = \{0,1,\ldots,q-1\} \subset \ZZ$ for the set of digits in base $q$. We define the map $\Psi_m :
\FF_q^m \to [0,1)$ by
$$
\Psi_m(\bh^{\top})=\sum_{j=1}^m \psi(h_j) q^{-j}
$$
for any column vector $\bh^{\top}=(h_1,\ldots,h_m)^{\top} \in \FF_q^m$, where $\psi: \FF_q \to \ZZ_q$ is a chosen bijection. With a fixed column vector
$\bb^{\top} \in \FF_q^m$, we associate the point
\begin{equation} \label{eqps}
\left(\Psi_m(C^{(1)} \bb^{\top}),\ldots,\Psi_m(C^{(s)} \bb^{\top}) \right) \in [0,1)^s.
\end{equation}
By letting $\bb^{\top}$ range over all $q^m$ column vectors in $\FF_q^m$, we arrive at a point set consisting of $q^m$ points in $[0,1)^s$. This construction
of digital nets can be generalized somewhat by employing further bijections between $\FF_q$ and $\ZZ_q$ (see \cite[p.~63]{N92}), but this is not needed for
our purposes since our results depend just on the generating matrices. For $i=1,\ldots,s$ and $j=1,\ldots,m$, let $\bc_j^{(i)} \in \FF_q^m$ denote the $j$th
row vector of the matrix $C^{(i)}$.

\begin{defi} \label{dedn} {\rm
Let $q$ be a prime power and let $t$, $m$, and $s$ be integers with $0 \le t \le m$, $m \ge 1$, and $s \ge 1$. Then the point set consisting of the 
$q^m$ points in~\eqref{eqps} is a
\emph{digital $(t,m,s)$-net over} $\FF_q$ if for any nonnegative integers $d_1,\ldots,d_s$ with $\sum_{i=1}^s d_i = m-t$, the $m-t$ vectors $\bc_j^{(i)} \in
\FF_q^m$ with $1 \le j \le d_i$ and $1 \le i \le s$ are linearly independent over $\FF_q$ (the empty collection of vectors occurring in the case $t=m$ is
considered linearly independent over $\FF_q$).}
\end{defi}

It is evident that the condition in Definition~\ref{dedn} becomes the stronger the smaller the value of $t$. The main interest is therefore in constructing
digital $(t,m,s)$-nets over $\FF_q$ with a small value of $t$. The number $t$ is called the \emph{quality parameter} of a digital $(t,m,s)$-net over $\FF_q$.

\begin{remark} \label{redn} {\rm
The definition of a digital $(t,m,s)$-net over $\FF_q$ can be translated into an explicit equidistribution property of the points of the digital net as follows.
Consider any subinterval $J$ of $[0,1)^s$ of the form
$$
J=\prod_{i=1}^s [a_iq^{-d_i},(a_i+1)q^{-d_i})
$$
with $a_i,d_i \in \ZZ$, $d_i \ge 0$, $0 \le a_i < q^{d_i}$ for $1 \le i \le s$, and with $J$ having $s$-dimensional volume $q^{t-m}$. Then any such interval $J$ contains
exactly $q^t$ points of the digital net. The proof of this fact can be found, for instance, in \cite[Section 4.4.2]{DP10}. From this point of view, it is again
clear that we are interested in small values of $t$ since then the family of intervals $J$ for which the above equidistribution property holds becomes larger.}
\end{remark}

Our starting point for the construction of new digital nets is the suggestion made in \cite[Remark~6.3]{N13} to view the row vectors of the generating matrices
as elements of the finite field $\FF_{q^m}$ (which is isomorphic to $\FF_q^m$ as an $\FF_q$-linear space). Thus, we consider elements $\gamma_j^{(i)} \in
\FF_{q^m}$, $1 \le i \le s$, $1 \le j \le m$, and the $j$th row of $C^{(i)}$ is then obtained as $\bc_j^{(i)} = \phi(\gamma_j^{(i)})$, where $\phi : \FF_{q^m}
\to \FF_q^m$ is a fixed vector space isomorphism (or, equivalently, $\bc_j^{(i)}$ is the coordinate vector of $\gamma_j^{(i)}$ relative to a fixed ordered basis
of $\FF_{q^m}$ over $\FF_q$). Again following \cite[Remark~6.3]{N13}, we arrange the $\gamma_j^{(i)}$ into an $s \times m$ matrix 
$C=(\gamma_j^{(i)})_{1 \le i \le s, \; 1 \le j \le m}$ over $\FF_{q^m}$, and we have then a single matrix that governs the construction of the digital net.
Because of the vector space isomorphism between $\FF_{q^m}$ and $\FF_q^m$, the following observation is an immediate consequence of Definition~\ref{dedn}.

\begin{lemma} \label{leed}
The digital net obtained from the matrix $C=(\gamma_j^{(i)})_{1 \le i \le s, \; 1 \le j \le m}$ over $\FF_{q^m}$ is a digital $(t,m,s)$-net over $\FF_q$ if and
only if, for any integers $d_1,\ldots,d_s \ge 0$ with $\sum_{i=1}^s d_i = m-t$, the $m-t$ elements $\gamma_j^{(i)} \in \FF_{q^m}$ with $1 \le j \le d_i$ and $1 \le i \le s$ 
are linearly independent over $\FF_q$.
\end{lemma}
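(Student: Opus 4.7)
The plan is to reduce the statement directly to Definition~\ref{dedn} via the fixed vector space isomorphism $\phi:\FF_{q^m}\to\FF_q^m$, since the construction of the digital net from $C$ was defined precisely by declaring the $j$th row of $C^{(i)}$ to be $\bc_j^{(i)}=\phi(\gamma_j^{(i)})$.

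First I would invoke Definition~\ref{dedn} applied to the generating matrices $C^{(1)},\ldots,C^{(s)}$: the point set in \eqref{eqps} is a digital $(t,m,s)$-net over $\FF_q$ if and only if, for every choice of nonnegative integers $d_1,\ldots,d_s$ with $\sum_{i=1}^s d_i=m-t$, the $m-t$ vectors $\bc_j^{(i)}\in\FF_q^m$ with $1\le j\le d_i$, $1\le i\le s$, are linearly independent over $\FF_q$. Substituting $\bc_j^{(i)}=\phi(\gamma_j^{(i)})$ rephrases this condition as: the vectors $\phi(\gamma_j^{(i)})$ are linearly independent over $\FF_q$.

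The key (and essentially only) step is that $\phi$ is an $\FF_q$-linear bijection, so it preserves $\FF_q$-linear (in)dependence: a family $\{\phi(\gamma_j^{(i)})\}$ in $\FF_q^m$ is linearly independent over $\FF_q$ if and only if $\{\gamma_j^{(i)}\}$ is linearly independent over $\FF_q$ when $\FF_{q^m}$ is regarded as an $m$-dimensional $\FF_q$-vector space. Applying this equivalence to the condition obtained from Definition~\ref{dedn} yields the stated criterion.

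There is no real obstacle here; the proof is a one-line translation through the isomorphism. The only thing worth being careful about is to emphasize that linear independence is taken over $\FF_q$ (not over $\FF_{q^m}$, where any two nonzero elements would be dependent), and that the isomorphism $\phi$ chosen to pass from $\FF_{q^m}$ to $\FF_q^m$ is $\FF_q$-linear by construction, so it transports $\FF_q$-linear independence in both directions.
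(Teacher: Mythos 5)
Your proof is correct and matches the paper exactly: the paper presents Lemma~\ref{leed} as an immediate consequence of Definition~\ref{dedn} via the $\FF_q$-linear isomorphism $\phi$, which is precisely your one-line translation argument, including the observation that $\phi$ transports $\FF_q$-linear independence in both directions.
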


It is of apparent interest to consider a matrix $C$ that is structured. In this paper, we analyze what happens when we choose a matrix $C$ that has a
Vandermonde-type structure. Concretely, we choose an $s$-tuple $\bsalpha =(\alpha_1,\ldots,\alpha_s) \in \FF_{q^m}^s$ and then we set up the $s \times m$ matrix $C=
(\gamma_j^{(i)})_{1 \le i \le s, \; 1 \le j \le m}$ over $\FF_{q^m}$ defined by $\gamma_j^{(1)}=\alpha_1^{j-1}$ for $1 \le j \le m$ and (if $s \ge 2$) 
$\gamma_j^{(i)}=\alpha_i^j$ for $2 \le i \le s$ and $1 \le j \le m$. We use the standard convention $0^0=1 \in \FF_q$. For obvious reasons, we call the digital
net obtained from $C$ a \emph{Vandermonde net over} $\FF_q$.

\begin{remark} \label{rej} {\rm
If $s \ge 2$, then for $2 \le i \le s$ we do not want to put $\gamma_j^{(i)}=\alpha_i^{j-1}$ for $1 \le j \le m$, since otherwise the elements $\gamma_1^{(1)}
=1 \in \FF_q$ and $\gamma_1^{(2)}=1 \in \FF_q$ are linearly dependent over $\FF_q$, and so the least value of $t$ such that the resulting digital net is a
digital $(t,m,s)$-net over $\FF_q$ is $t=m-1$.}
\end{remark}

\begin{remark} \label{rehy} {\rm
A broad class of digital nets, namely that of hyperplane nets, was introduced in~\cite{PDP} (see also \cite[Chapter~11]{DP10}). Choose $\alpha_1,\ldots,\alpha_s
\in \FF_{q^m}$ not all $0$. Then for the corresponding hyperplane net relative to a fixed ordered basis $\omega_1,\ldots,\omega_m$ of $\FF_{q^m}$ over $\FF_q$,
the matrix $C=(\gamma_j^{(i)})_{1 \le i \le s, \; 1 \le j \le m}$ in Lemma~\ref{leed} is given by $\gamma_j^{(i)}=\alpha_i \omega_j$ for $1 \le i \le s$ and
$1 \le j \le m$ (see \cite[Theorem~11.5]{DP10} and \cite[Remark~6.4]{N13}). Thus, $C$ is also a structured matrix, but the structure is in general not a
Vandermonde structure. Consequently, Vandermonde nets are in general not hyperplane nets relative to a fixed ordered basis of $\FF_{q^m}$ over $\FF_q$.}
\end{remark}

In this paper, we discuss various aspects of Vandermonde nets. Section~\ref{seqp} ensures the existence of Vandermonde nets having small quality parameter 
and as a by-product the existence of such nets satisfying good discrepancy bounds. This by-product is improved in Section~\ref{sec:disc} by using averaging 
arguments.  Section~\ref{seex} presents an explicit construction of Vandermonde nets over $\FF_q$ in dimensions $s\leq q+1$ with best possible quality parameter. 
Finally, Section~\ref{secb} breaks the first ground for component-by-component constructions of Vandermonde nets.

\section{Existence results for a small quality parameter} \label{seqp}

For the investigation of the quality parameter of a Vandermonde net over $\FF_q$, we make use of the following notation and conventions. We write $\FF_q[x]$
for the ring of polynomials over $\FF_q$ in the indeterminate $x$. For any integer $m \ge 1$, we put 
\begin{eqnarray*}
H_{q,m} & := & \{h\in\FF_q[x]:\deg(h)\leq m, \, h(0)=0\},\\
H_{q,m}^* & := & \{h \in \FF_q[x]: \deg(h) < m\},
\end{eqnarray*}
where $\deg(0):=0$. Furthermore, we define $\deg^*(h) := \deg(h)$ for $h \in \FF_q[x]$ with $h \ne 0$ and $\deg^*(0) := -1$. We write 
$\bsh:=(h_1,\ldots,h_s)\in\FF_q[x]^s$ for a given dimension $s \ge 1$. Finally, for any $\bsalpha =(\alpha_1,\ldots,\alpha_s) \in \FF_{q^m}^s$, we put
$$
D_{q,m,\bsalpha}:=\{\bsh\in H_{q,m}^* \times H_{q,m}^{s-1}:\sum_{i=1}^s h_i(\alpha_i)=0\}
$$
and $D'_{q,m,\bsalpha}:=D_{q,m,\bsalpha}\setminus\{\bszero\}$. 

We define the following figure of merit. We use the standard convention that an empty sum is equal to $0$. 

\begin{defi} \label{defm} {\rm
If $D'_{q,m,\bsalpha}$ is nonempty, we define the \emph{figure of merit}
$$
\varrho(\bsalpha):=\min_{\bsh\in D'_{q,m,\bsalpha}} \big(\deg^*(h_1)+\sum_{i=2}^s\deg(h_i) \big).
$$
Otherwise, we define $\varrho(\bsalpha):=m$.}
\end{defi}

It is trivial that we always have $\varrho(\bsalpha) \ge 0$. For $s=1$ it is clear that $\varrho(\bsalpha) \le m$. For $s \ge 2$ the $m+1$ elements $1,
\alpha_1,\ldots,\alpha_1^{m-1},\alpha_2 \in \FF_{q^m}$ are linearly dependent over $\FF_q$, and so again $\varrho(\bsalpha) \le m$. 

\begin{theorem} \label{thqp} 
Let $q$ be a prime power, let $s,m\in\NN$, and let $\bsalpha=(\alpha_1,\ldots,\alpha_s)\in\FF^s_{q^m}$. 
Then the Vandermonde net determined by $\bsalpha\in\FF^s_{q^m}$ is a digital $(t,m,s)$-net over $\FF_q$ with $t=m-\varrho(\bsalpha)$. 
\end{theorem}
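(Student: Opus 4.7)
The plan is to apply Lemma~\ref{leed} and reformulate the linear independence condition on $\gamma_j^{(i)}$ as the non-existence of certain small-degree tuples in $D'_{q,m,\bsalpha}$, so that the bound drops out directly from the definition of $\varrho(\bsalpha)$.

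Set $t := m - \varrho(\bsalpha)$; since $0 \le \varrho(\bsalpha) \le m$, we have $0 \le t \le m$ as required. By Lemma~\ref{leed}, I need to show that for every choice of nonnegative integers $d_1,\ldots,d_s$ with $\sum_{i=1}^s d_i = m - t = \varrho(\bsalpha)$, the $\varrho(\bsalpha)$ elements $\{\gamma_j^{(i)}: 1 \le i \le s,\, 1\le j\le d_i\}$ are linearly independent over $\FF_q$. Argue by contradiction: assume a nontrivial relation
\[
\sum_{j=1}^{d_1} c_j^{(1)} \alpha_1^{j-1} \;+\; \sum_{i=2}^{s}\sum_{j=1}^{d_i} c_j^{(i)} \alpha_i^{j} \;=\; 0,
\]
with coefficients $c_j^{(i)} \in \FF_q$ not all zero, where I have already substituted the Vandermonde prescription $\gamma_j^{(1)}=\alpha_1^{j-1}$ and $\gamma_j^{(i)}=\alpha_i^j$ for $i\ge 2$.

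Now encode the relation as polynomials: define $h_1(x) := \sum_{j=1}^{d_1} c_j^{(1)} x^{j-1}$ and $h_i(x) := \sum_{j=1}^{d_i} c_j^{(i)} x^j$ for $2 \le i \le s$. Then $h_1 \in H_{q,m}^*$ because $\deg(h_1) \le d_1 - 1 < m$, and each $h_i$ for $i \ge 2$ lies in $H_{q,m}$ because $h_i(0)=0$ and $\deg(h_i) \le d_i \le m$. The relation above is exactly $\sum_{i=1}^s h_i(\alpha_i) = 0$, so $\bsh=(h_1,\ldots,h_s) \in D_{q,m,\bsalpha}$. Since the monomials appearing in the various $h_i$ (namely $1,x,\ldots,x^{d_1-1}$ in $h_1$ and $x,x^2,\ldots,x^{d_i}$ in $h_i$ for $i \ge 2$) involve distinct coefficient sets, the tuple $\bsh$ vanishes iff all $c_j^{(i)}$ vanish; nontriviality of the relation therefore gives $\bsh \in D'_{q,m,\bsalpha}$.

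Finally, exploit the degree bookkeeping. Using the conventions $\deg^*(0)=-1$ and $\deg(0)=0$, one checks in both cases $d_1 \ge 1$ and $d_1 = 0$ that $\deg^*(h_1) \le d_1 - 1$, and similarly $\deg(h_i) \le d_i$ for $i \ge 2$, hence
\[
\deg^*(h_1) + \sum_{i=2}^{s} \deg(h_i) \;\le\; (d_1-1) + \sum_{i=2}^{s} d_i \;=\; \varrho(\bsalpha) - 1.
\]
This contradicts Definition~\ref{defm}, which forces every element of $D'_{q,m,\bsalpha}$ to have this weighted degree sum at least $\varrho(\bsalpha)$. The case where $D'_{q,m,\bsalpha}$ is empty is subsumed by the same argument, since then the contradiction is simply the existence of such a $\bsh$.

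I do not foresee any real obstacle: the translation between $\FF_q$-linear relations among the $\gamma_j^{(i)}$ and polynomial tuples in $H_{q,m}^* \times H_{q,m}^{s-1}$ evaluating to $0$ at $\bsalpha$ is a bijection by construction, and the only point requiring care is the asymmetry between $i=1$ (powers $x^{j-1}$, so the constant term $1$ is allowed, giving $h_1 \in H_{q,m}^*$) and $i \ge 2$ (powers $x^j$, forcing $h_i(0)=0$, giving $h_i \in H_{q,m}$). This asymmetry is precisely what motivates the non-symmetric definition of $D_{q,m,\bsalpha}$ and of the weighted degree sum in $\varrho(\bsalpha)$, and it is exactly what Remark~\ref{rej} is pointing at.
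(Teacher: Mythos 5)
Your proof is correct and follows essentially the same route as the paper: apply Lemma~\ref{leed}, encode a purported nontrivial dependence relation as a tuple $\bsh\in D'_{q,m,\bsalpha}$ with $\deg^*(h_1)+\sum_{i=2}^s\deg(h_i)\le \varrho(\bsalpha)-1$, and contradict Definition~\ref{defm}. Your explicit handling of the edge cases ($d_1=0$, $h_i=0$, and $D'_{q,m,\bsalpha}=\emptyset$) is a slightly more detailed version of the paper's brief treatment of $\varrho(\bsalpha)=0$, but the argument is the same.
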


\begin{proof}
The case $\varrho(\bsalpha)=0$ is trivial by the parenthetical remark in Definition \ref{dedn}, and so we can assume that $\varrho(\bsalpha)\geq 1$. 
In view of Lemma~\ref{leed}, it suffices to show that for any integers $d_1,\ldots,d_s \ge 0$ with $\sum_{i=1}^s d_i=\varrho(\bsalpha)$, the elements
$\alpha_1^j$ for $0 \le j \le d_1-1$ and $\alpha_i^j$ for $1 \le j \le d_i$, $2 \le i \le s$, are linearly independent over $\FF_q$. A purported nontrivial
linear dependence relation for these elements can be written in the form
$$
\sum_{i=1}^s h_i(\alpha_i)=0,
$$
with a nonzero $s$-tuple $\bsh =(h_1,\ldots,h_s) \in H_{q,m}^* \times H_{q,m}^{s-1}$ satisfying $\deg^*(h_1) < d_1$ and $\deg(h_i) \le d_i$ for $2 \le i \le s$.
It follows that
$$
\deg^*(h_1) + \sum_{i=2}^s \deg(h_i) < \sum_{i=1}^s d_i = \varrho(\bsalpha).
$$
But since $\bsh \in D'_{q,m,\bsalpha}$, this is a contradiction to the definition of $\varrho(\bsalpha)$.
\end{proof} 

\begin{remark} \label{repl} {\rm
It is of interest to compare Vandermonde nets with the polynomial lattice point sets introduced in~\cite{N92b} (see also \cite[Chapter~10]{DP10}, \cite[Section~4.4]{N92},
and the recent survey article~\cite{P12} for the theory of polynomial lattice point sets). We consider polynomial lattice point sets with a modulus $f \in 
\FF_q[x]$ which is irreducible over $\FF_q$ of degree $m$. An $s$-dimensional polynomial lattice point set depends also on the choice of polynomials
$g_1,\ldots,g_s \in H_{q,m}^*$. One arrives at a digital $(t,m,s)$-net over $\FF_q$ with a quality parameter $t$ depending on a figure of merit analogous to
$\varrho(\bsalpha)$ in Definition~\ref{defm}. The crucial condition $\sum_{i=1}^s h_i(\alpha_i)=0$ in the definition of $D_{q,m,\bsalpha}$ above is now replaced by
\begin{equation} \label{eqhg}
\sum_{i=1}^s h_ig_i \equiv 0 \ ({\rm mod} \ f).
\end{equation}
Let $\theta \in \FF_{q^m}$ be a root of $f$. Then each $\alpha_i \in \FF_{q^m}$ in the definition of a Vandermonde $(t,m,s)$-net over $\FF_q$ can be written as 
$\alpha_i=f_i(\theta)$ with a unique
$f_i \in H_{q,m}^*$. Thus, we arrive at the condition $0=\sum_{i=1}^s h_i(\alpha_i)=\sum_{i=1}^s h_i(f_i(\theta))$ in the definition of $D_{q,m,\bsalpha}$, which
is equivalent to
$$
\sum_{i=1}^s h_i \circ f_i \equiv 0 \ ({\rm mod} \ f).
$$
This is similar to~\eqref{eqhg}, but with the products $h_ig_i$ replaced by the compositions $h_i \circ f_i$. We note that polynomial lattice point sets belong to
the family of hyperplane nets (see \cite[Theorem~11.7]{DP10}), and so Vandermonde nets are in general not polynomial lattice point sets (see Remark~\ref{rehy}). }
\end{remark}

\begin{remark} \label{reev} {\rm
Since polynomial lattice point sets are available also for a reducible modulus $f \in \FF_q[x]$ (see \cite[Definition~10.1]{DP10}), we may extend the definition
of Vandermonde nets in an analogous way. For an arbitrary (and thus not necessarily irreducible) $f \in \FF_q[x]$ with $\deg(f)=m \ge 1$, we consider the
residue class ring $\FF_q[x]/(f)$. Given a dimension $s \ge 1$, we choose $g_1,\ldots,g_s \in H_{q,m}^*$. Note that $\FF_q[x]/(f)$ is a vector space over $\FF_q$,
with the canonical ordered basis $B$ given by the residue classes of the monomials $1,x,\ldots,x^{m-1}$ modulo $f$. Now we construct a digital net over $\FF_q$
with generating matrices $C^{(1)},\ldots,C^{(s)} \in \FF_q^{m \times m}$ as follows. For $1 \le j \le m$, the $j$th row vector of $C^{(1)}$ is given by the
coordinate vector of the residue class of $g_1^{j-1}$ modulo $f$ relative to the ordered basis $B$. If $s \ge 2$, then for $2 \le i \le s$ and $1 \le j \le m$, 
the $j$th row vector of $C^{(i)}$ is given by the coordinate vector of the residue class of $g_i^j$ modulo $f$ relative to the ordered basis $B$. We leave the
theory of these more general Vandermonde nets for future work. As for polynomial lattice point sets, the theory of general Vandermonde nets will be significantly
more complicated for reducible moduli $f$. }
\end{remark}

Now we establish existence results for Vandermonde $(t,m,s)$-nets over $\FF_q$ with a small quality parameter $t$. We use an elimination method which is inspired
by a similar method for polynomial lattice point sets (see~\cite[Section~3]{LLNS} and \cite[Section~10.1]{DP10}). We first show a simple enumeration result. 

\begin{lemma} \label{lepo}
For a prime power $q$, for $l\in\NN$ and $n \in\ZZ$, the number $A_q(l,n)$ of $(h_1,\ldots,h_l)\in\FF_q[x]^l$ with $h_i\neq 0$ and $h_i(0)=0$ 
for $1\leq i\leq l$ and $\sum_{i=1}^l\deg(h_i)= n$ is given by 
$$A_q(l,n)=\binom{n-1}{n-l}(q-1)^lq^{n-l},$$   
where we use the convention for binomial coefficients that $\binom{m}{k}=0$ whenever $k>m$ or $k<0$. 
\end{lemma}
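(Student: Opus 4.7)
The plan is straightforward counting, organized by partitioning on the degree sequence. I would first note that any $h_i \in \FF_q[x]$ satisfying $h_i \neq 0$ and $h_i(0)=0$ must have degree at least $1$, so a tuple $(h_1,\ldots,h_l)$ counted by $A_q(l,n)$ is determined by the choice of a degree sequence $(d_1,\ldots,d_l)$ with each $d_i \ge 1$ and $\sum_{i=1}^l d_i = n$, together with a choice of a nonzero polynomial of the prescribed degree with vanishing constant term for each coordinate.

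Next, I would count the admissible polynomials of a fixed degree $d \ge 1$ with $h(0)=0$: such an $h$ has $d+1$ coefficients, the constant coefficient is forced to be $0$, the leading coefficient must be nonzero (so lies in $\FF_q^\times$, giving $q-1$ choices), and the $d-1$ intermediate coefficients are free (giving $q^{d-1}$ choices). Hence the count for a fixed degree sequence $(d_1,\ldots,d_l)$ is
$$
\prod_{i=1}^l (q-1)q^{d_i-1} \;=\; (q-1)^l q^{n-l},
$$
which depends only on $n$ and $l$, not on the specific $d_i$. It then remains to count the number of admissible degree sequences, i.e.\ the number of compositions of $n$ into $l$ positive parts, which is the standard stars-and-bars number $\binom{n-1}{l-1} = \binom{n-1}{n-l}$. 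Multiplying these two factors produces the stated formula.

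The only subtlety is checking that the formula gives the right answer in degenerate ranges, which I would dispatch with the convention on binomial coefficients stated in the lemma. If $n < l$ then no composition of $n$ into $l$ positive parts exists, so $A_q(l,n)=0$, matching $\binom{n-1}{n-l}=0$ because $n-l<0$. Similarly, if $n \le 0$ while $l \ge 1$, then no such tuples exist, and the binomial coefficient vanishes for the same reason. So there is no real obstacle here; the argument is elementary combinatorics, and the main care is simply to treat the degenerate cases by appealing to the stated convention on $\binom{m}{k}$.
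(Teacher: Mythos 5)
Your proposal is correct and follows essentially the same route as the paper's proof: count $(q-1)q^{d_i-1}$ polynomials for each fixed degree $d_i\ge 1$, observe the product $(q-1)^l q^{n-l}$ is independent of the degree sequence, and multiply by the number $\binom{n-1}{n-l}$ of compositions of $n$ into $l$ positive parts, with the degenerate case $n<l$ handled by the stated binomial convention. No differences worth noting.
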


\begin{proof}
Note that $h_i\neq 0$ and $h_i(0)=0$ imply $\deg(h_i)\geq 1$, and so trivially $A_q(l,n)=0$ for $n < l$.  
For $n \ge l$, we count the number of $l$-tuples $(d_1,\ldots,d_l)\in\NN^l$ such that $\sum_{i=1}^l d_i=n$, or equivalently the number of 
$l$-tuples $(d_1-1,\ldots,d_l-1)\in\NN_0^l$ such that $\sum_{i=1}^l (d_i-1)=n-l$. The latter number of $l$-tuples is given by $\binom{n-1}{n-l}$. 
For each $(d_1,\ldots,d_l) \in \NN^l$ with $\sum_{i=1}^l d_i=n$, there are $(q-1)^lq^{d_1-1}\cdots q^{d_l-1}=(q-1)^lq^{n-l}$ different 
$(h_1,\ldots,h_l)\in\FF_q[x]^l$ satisfying $h_i(0)=0$ and $\deg(h_i)=d_i$ for $1\leq i\leq l$, and the result follows.  
\end{proof}

Next we estimate the number $M_q(m,s,\sigma)$ of $(\alpha_1,\ldots,\alpha_s) \in \FF^s_{q^m}$ such that $\sum_{i=1}^s h_i(\alpha_i)= 0$ for at least 
one nonzero $s$-tuple $(h_1,\ldots,h_s) \in H_{q,m}^* \times H_{q,m}^{s-1}$ satisfying 
\begin{equation} \label{eqsi}
\deg^*(h_1)+\sum_{i=2}^s \deg(h_i) \leq \sigma.  
\end{equation}
We assume that $\sigma \in \ZZ$ and $0 \le \sigma \le m-1$. We have
\begin{equation} \label{eqmq}
M_q(m,s,\sigma) \le M_q^{(1)}(m,s,\sigma)+M_q^{(2)}(m,s,\sigma),
\end{equation}
where $M_q^{(1)}(m,s,\sigma)$, respectively $M_q^{(2)}(m,s,\sigma)$, is the number of $(\alpha_1,\ldots,\alpha_s) \in \FF_{q^m}^s$ such that $\sum_{i=1}^s
h_i(\alpha_i)=0$ for at least one nonzero $s$-tuple $(h_1,\ldots,h_s) \in H_{q,m}^* \times H_{q,m}^{s-1}$ with $h_1=0$, respectively $h_1 \ne 0$, satisfying~\eqref{eqsi}. 

We first consider $M_q^{(1)}(m,s,\sigma)$. Initially, we fix the number $d$ of zero entries in a nonzero $s$-tuple $(0,h_2,\ldots,h_s) \in H_{q,m}^* \times H_{q,m}^{s-1}$. 
Note that $1 \le d \le s-1$ and that~\eqref{eqsi} yields
$$
s-d \le \sum_{i=2}^s \deg(h_i) =: n \le \sigma +1.
$$
There exists an index $j \in \{2,\ldots,s\}$ such that $1\leq \deg(h_j)\leq \lfloor n/(s-d)\rfloor$. Then 
for each of the $q^{m(s-1)}$ choices of $(\alpha_1,\ldots,\alpha_{j-1},\alpha_{j+1},\ldots,\alpha_s) \in \FF_{q^m}^{s-1}$, there are at most 
$\lfloor n/(s-d)\rfloor$ choices of $\alpha_j \in \FF_{q^m}$ such that 
\begin{equation} \label{eqhj}
h_j(\alpha_j)=-\sum_{{i=1}\atop{i\neq j}}^s h_i(\alpha_i).
\end{equation}
There are $\binom{s-1}{d-1}$ choices for the positions of the zero entries in $(0,h_2,\ldots,h_s)$, and for each such choice there are $A_q(s-d,n)$ choices for the
$s-d$ nonzero entries. Using Lemma~\ref{lepo}, we arrive at the bound
\begin{eqnarray*}
 \lefteqn{
M_q^{(1)}(m,s,\sigma)}\\
& \le&  \sum_{d=1}^{s-1} \binom{s-1}{d-1} \sum_{n=s-d}^{\sigma +1} \binom{n-1}{n-s+d} (q-1)^{s-d}q^{n-s+d} q^{m(s-1)}
\left\lfloor \frac{n}{s-d} \right\rfloor.
\end{eqnarray*}

The estimation of $M_q^{(2)}(m,s,\sigma)$ proceeds in a similar way. Let $d$ be the number of zero entries in an $s$-tuple 
$(h_1,\ldots,h_s) \in H_{q,m}^* \times H_{q,m}^{s-1}$ with $h_1 \ne 0$. Then $0 \le d \le s-1$ and
$$
s-d-1 \le \sum_{i=1}^s \deg(h_i) =: n \le \sigma.
$$
There exists an index $j \in \{1,\ldots,s\}$ with $h_j \ne 0$ and $\deg(h_j) \le \lfloor n/(s-d) \rfloor$. As above, each choice of $(\alpha_1,\ldots,\alpha_{j-1},
\alpha_{j+1},\ldots,\alpha_s) \in \FF_{q^m}^{s-1}$ leaves at most $\lfloor n/(s-d) \rfloor$ choices of $\alpha_j \in \FF_{q^m}$ satisfying~\eqref{eqhj}. Since
$h_1 \ne 0$, there are $\binom{s-1}{d}$ choices for the positions of the zero entries in $(h_1,\ldots,h_s)$, and for each such choice there are $A_q(s-d,n+1)$ choices
for the $s-d$ nonzero entries (replace $h_1(x)$ by $xh_1(x)$ in order to arrive at the counting problem in Lemma~\ref{lepo}). Using Lemma~\ref{lepo}, we obtain
\begin{eqnarray*}
\lefteqn{M_q^{(2)}(m,s,\sigma)}\\ & \le & \sum_{d=0}^{s-1} \binom{s-1}{d} \sum_{n=s-d-1}^{\sigma} \binom{n}{n+1-s+d} (q-1)^{s-d} q^{n+1-s+d} q^{m(s-1)} 
\left\lfloor \frac{n}{s-d} \right\rfloor \\
& \le & \sum_{d=0}^{s-1} \binom{s-1}{d} \sum_{n=s-d}^{\sigma +1} \binom{n-1}{n-s+d} (q-1)^{s-d} q^{n-s+d} q^{m(s-1)} \left\lfloor \frac{n}{s-d} \right\rfloor.
\end{eqnarray*}
Now we use~\eqref{eqmq} and $\binom{s-1}{d-1} + \binom{s-1}{d} = \binom{s}{d}$ for $0 \le d \le s-1$, and this yields
\begin{eqnarray*}
\lefteqn{M_q(m,s,\sigma)}\\ &\le  & \sum_{d=0}^{s-1} \binom{s}{d} \sum_{n=s-d}^{\sigma +1} \binom{n-1}{n-s+d} (q-1)^{s-d} q^{n-s+d} q^{m(s-1)}
\left\lfloor \frac{n}{s-d} \right\rfloor\\
& = & q^{m(s-1)} \sum_{d=0}^{s-1} \binom{s}{d} (q-1)^{s-d}
\sum_{n=0}^{\sigma -s+d+1} \binom{n+s-d-1}{n} \left\lfloor \frac{n +s-d}{s-d} \right\rfloor q^n.
\end{eqnarray*}
We define 
$$
\Delta_q(s,\sigma):=\sum_{d=0}^{s-1} \binom{s}{d} (q-1)^{s-d} \sum_{n=0}^{\sigma -s+d} \binom{n+s-d-1}{n}
\left\lfloor \frac{n+s-d}{s-d} \right\rfloor q^n.
$$

Now we come to the crucial step: if $\Delta_q(s,\sigma +1)<q^m$ and therefore $M_q(m,s,\sigma)<q^{ms}$, then it follows that there exists at least one 
$\bsalpha=(\alpha_1,\ldots,\alpha_s)\in\FF^s_{q^m}$ such that 
$\sum_{i=1}^sh_i(\alpha_i)\neq0$ for every nonzero $s$-tuple $(h_1,\ldots,h_s)\in H_{q,m}^* \times H_{q,m}^{s-1}$ satisfying~\eqref{eqsi}. Hence for such $\bsalpha$, 
the figure of merit $\varrho(\bsalpha)$ satisfies $\varrho(\bsalpha) \geq \sigma+1$. From Theorem \ref{thqp} we deduce that the corresponding Vandermonde net 
satisfies $t\leq m-\sigma -1$. We have thus shown the following theorem. 

\begin{theorem} \label{thex}
Let $q$ be a prime power and let $s,m \in \NN$. If $\Delta_q(s,\sigma)<q^m$ for some $\sigma \in \NN$ with $\sigma \le m$, then there exists an 
$\bsalpha\in\FF^s_{q^{m}}$ with $\varrho(\bsalpha) \ge \sigma$. This $\bsalpha$ generates a Vandermonde 
$(t,m,s)$-net over $\FF_q$ with $t\leq m-\sigma$. 
\end{theorem}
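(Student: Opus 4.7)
The plan is to bound the number of ``bad'' $\bsalpha$ --- those violating $\varrho(\bsalpha) \ge \sigma$ --- by a union bound/counting argument and show this count is strictly less than $q^{ms}$ whenever $\Delta_q(s,\sigma) < q^m$. Concretely, let $M_q(m,s,\sigma-1)$ denote the number of $\bsalpha \in \FF_{q^m}^s$ for which some nonzero $\bsh \in H_{q,m}^* \times H_{q,m}^{s-1}$ satisfies $\sum_{i=1}^s h_i(\alpha_i) = 0$ with $\deg^*(h_1) + \sum_{i \ge 2} \deg(h_i) \le \sigma - 1$. If I can prove $M_q(m,s,\sigma-1) < q^{ms}$, then at least one $\bsalpha$ avoids every such short relation, hence $\varrho(\bsalpha) \ge \sigma$, and Theorem~\ref{thqp} converts this to $t \le m - \sigma$.

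The core estimate I would obtain by splitting into the two cases $h_1 = 0$ and $h_1 \ne 0$, because the first coordinate lies in $H_{q,m}^*$ while the later ones lie in $H_{q,m}$ and are thus constrained by $h_i(0) = 0$ whenever nonzero. Within each case, I would further stratify the offending tuples $\bsh$ by the number $d$ of zero coordinates. Given $s - d$ nonzero entries whose degrees sum to $n$, a pigeonhole observation picks out an index $j$ with $\deg(h_j) \le \lfloor n/(s-d)\rfloor$; then fixing the other $s-1$ values $\alpha_i$ freely yields $q^{m(s-1)}$ choices, and for each of these the equation $h_j(\alpha_j) = -\sum_{i \ne j} h_i(\alpha_i)$ has at most $\lfloor n/(s-d)\rfloor$ solutions in $\FF_{q^m}$. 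The positions of zeros contribute binomial factors $\binom{s-1}{d-1}$ or $\binom{s-1}{d}$, and the nonzero entries are enumerated by Lemma~\ref{lepo} (with the trick, in the $h_1 \ne 0$ case, of replacing $h_1$ by $xh_1$ to fit the lemma's hypothesis $h_i(0)=0$, which accounts for the small $n \mapsto n+1$ shift).

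Summing the two cases and applying the Pascal identity $\binom{s-1}{d-1} + \binom{s-1}{d} = \binom{s}{d}$ collapses everything into a single expression that, after reindexing the inner degree sum by $n \mapsto n - (s-d)$, matches $q^{m(s-1)} \, \Delta_q(s,\sigma)$ exactly. Thus $\Delta_q(s,\sigma) < q^m$ gives $M_q(m,s,\sigma-1) < q^{ms}$, and the theorem follows by Theorem~\ref{thqp}. The main bookkeeping hurdle is the asymmetry between $h_1 \in H_{q,m}^*$ and $h_i \in H_{q,m}$ for $i \ge 2$, together with the $\deg^*$-versus-$\deg$ convention; carefully tracking this is precisely what makes the two inner sums (starting at $n = s-d$ and $n = s-d-1$, respectively) reconcile under the Pascal identity to produce the clean quantity $\Delta_q(s,\sigma)$.
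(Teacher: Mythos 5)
Your proposal is correct and follows essentially the same route as the paper: the paper's proof of this theorem is exactly the counting argument you describe, carried out in the text preceding the statement --- the split into the cases $h_1=0$ and $h_1\neq 0$ (giving $M_q^{(1)}$ and $M_q^{(2)}$), the stratification by the number $d$ of zero entries, the pigeonhole choice of $j$ with $\deg(h_j)\le\lfloor n/(s-d)\rfloor$ bounding the number of admissible $\alpha_j$, the $xh_1(x)$ substitution to invoke Lemma~\ref{lepo}, and the Pascal identity collapsing the two sums into $q^{m(s-1)}\Delta_q(s,\sigma)$ (the paper writes this as $M_q(m,s,\sigma)\le q^{m(s-1)}\Delta_q(s,\sigma+1)$, which is your bound after the index shift $\sigma\mapsto\sigma-1$). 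The only cosmetic difference is that you phrase the conclusion as a strict count $M_q(m,s,\sigma-1)<q^{ms}$ while the paper states the pigeonhole step separately before invoking Theorem~\ref{thqp}; the substance is identical.
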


\begin{coro} \label{coex}
Let $q$ be a prime power and let $s,m \in \NN$. Then there exists an $\bsalpha \in \FF_{q^m}^s$ with 
$$\varrho(\bsalpha)\geq \lfloor m-s\log_q m -3 \rfloor,$$
where $\log_q$ denotes the logarithm to the base $q$.
\end{coro}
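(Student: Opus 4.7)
The plan is to deduce the corollary from Theorem~\ref{thex} by taking $\sigma := \lfloor m - s\log_q m - 3 \rfloor$ and verifying $\Delta_q(s,\sigma) < q^m$. If $\sigma \le 0$ the claim is vacuous since $\varrho(\bsalpha) \ge 0$ for every $\bsalpha$, so I may assume $\sigma \ge 1$. From $\sigma \le m - s\log_q m - 3$ I record two consequences that the estimate will eventually need to match: $\sigma + 1 \le m$, and $q^{m-\sigma} \ge q^{s\log_q m + 3} = q^3 m^s$.

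To estimate $\Delta_q(s,\sigma)$ I would reindex the outer sum by $k = s - d$ and replace the inner summand using the identity $\binom{n+k-1}{k-1}\lfloor(n+k)/k\rfloor \le \binom{n+k-1}{k-1}(n+k)/k = \binom{n+k}{k}$. The resulting inner sum $\sum_{n=0}^{\sigma-k}\binom{n+k}{k} q^n$ has consecutive ratios $q(n+k)/n \ge q$, so it is bounded by $q/(q-1)$ times its last term $\binom{\sigma}{k} q^{\sigma-k}$. Pulling out $q^\sigma$, discarding the factor $((q-1)/q)^k \le 1$, and then applying Vandermonde's convolution $\sum_{k=0}^{s}\binom{s}{k}\binom{\sigma}{k} = \binom{s+\sigma}{s}$ collapses the outer sum and gives
$$\Delta_q(s,\sigma) \;\le\; \frac{q}{q-1}\,q^\sigma \binom{s+\sigma}{s} \;\le\; 2\,q^\sigma \binom{s+\sigma}{s}.$$

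The decisive step is the elementary bound $\binom{s+\sigma}{s} \le (\sigma+1)^s$, which I would read off from the factorization $\binom{s+\sigma}{s} = \prod_{j=1}^{s}(\sigma+j)/j$ by noting that each factor $1 + \sigma/j$ is at most $\sigma + 1$. Combined with $\sigma + 1 \le m$ this yields $\Delta_q(s,\sigma) \le 2 q^\sigma m^s$, and the earlier inequality $q^{m-\sigma} \ge q^3 m^s \ge 8 m^s$ closes the argument: $\Delta_q(s,\sigma) \le 2 q^\sigma m^s < q^\sigma \cdot q^{m-\sigma} = q^m$. Theorem~\ref{thex} then supplies an $\bsalpha \in \FF_{q^m}^s$ with $\varrho(\bsalpha) \ge \sigma$.

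The main obstacle is picking a binomial coefficient bound tight enough to exactly match the right-hand side $q^{m-\sigma}$. Coarser estimates such as $\binom{s+\sigma}{s} \le 2^{s+\sigma}$ or $(s+\sigma)^s/s!$ each lose a quantity that is exponential in $s$ once $\sigma$ is taken as large as $m - s\log_q m - 3$, and the final inequality collapses. The factorization into $s$ factors each at most $\sigma + 1 \le m$ is tailor-made to absorb precisely the $q^{s\log_q m} = m^s$ factor from $q^{m-\sigma}$, with the $-3$ in the statement of the corollary providing the harmless constant slack $q^3/2 \ge 4$ that makes the final strict inequality hold for every prime power $q$.
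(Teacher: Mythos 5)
Your argument is correct, and every step checks out: the reindexing $k=s-d$, the exact identity $\binom{n+k-1}{k-1}\frac{n+k}{k}=\binom{n+k}{k}$, the ratio-$\ge q$ bound on the inner sum, the Vandermonde convolution $\sum_{k=0}^{s}\binom{s}{k}\binom{\sigma}{k}=\binom{s+\sigma}{s}$, the product bound $\binom{s+\sigma}{s}\le(\sigma+1)^s\le m^s$, and the final comparison with $q^{m-\sigma}\ge q^3m^s\ge 8m^s$. The overall strategy is necessarily the same as the paper's (apply Theorem~\ref{thex} with $\sigma=\lfloor m-s\log_q m-3\rfloor$ after showing $\Delta_q(s,\sigma)<q^m$), but your estimation of $\Delta_q(s,\sigma)$ takes a genuinely different and arguably cleaner route. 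The paper keeps the floor factor separate (bounding it by $\sigma/(s-d)$), estimates $\binom{\sigma-1}{s-d-1}\le(\sigma-1)^{s-d-1}/(s-d-1)!$, invokes the ad hoc inequality $(k+1)\cdot(k+1)!\ge 4^k$, and collapses the outer sum with the binomial theorem, arriving at $\Delta_q(s,\sigma)\le s\sigma q^{\sigma}\bigl(\frac{\sigma+3}{4}\bigr)^{s-1}$; it also needs $s\ge 2$ at the end and treats $s=1$ separately via an irreducible polynomial. You instead absorb the floor exactly into the binomial coefficient and collapse the outer sum with Vandermonde's convolution, getting the single closed form $2q^{\sigma}\binom{s+\sigma}{s}\le 2q^{\sigma}(\sigma+1)^s$, which handles $s=1$ uniformly (at the cost of not recording the stronger fact $\varrho=m$ available there). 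Both bounds are of the shape $q^{\sigma}\cdot O(m^s)$ and both exploit the $-3$ in the statement only to supply constant slack, so the two proofs are equally sharp for the purposes of the corollary; yours trades the factorial/binomial-theorem machinery for a combinatorial identity and is somewhat shorter.
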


\begin{proof}
For $s=1$ we can achieve $\varrho(\bsalpha)=\varrho((\alpha_1))=m$ by choosing $\alpha_1 \in \FF_{q^m}$ as a root of an irreducible polynomial over $\FF_q$
of degree $m$. If $s \ge 2$, it suffices to prove by Theorem~\ref{thex} that for 
$$\sigma_1 := \lfloor m-s\log_q m -3 \rfloor$$
we have $\Delta_q(s,\sigma_1)< q^m$. We can assume that $\sigma_1 \ge 1$, for otherwise the result is trivial. In the following, we derive a general 
upper bound on $\Delta_q(s,\sigma)$ for $\sigma \ge 1$ and then in a second step we use the specific form of $\sigma_1$. First of all, we have 
\begin{eqnarray*}
\Delta_q(s,\sigma) & \le & \sum_{d=0}^{s-1}\binom{s}{d}\left\lfloor\frac{\sigma}{s-d}\right\rfloor(q-1)^{s-d} \sum_{n=0}^{\sigma-s+d} 
\binom{n+s-d-1}{s-d-1} q^n\\
& \le & \sum_{d=0}^{s-1}\binom{s}{d} \frac{\sigma}{s-d} (q-1)^{s-d-1}\binom{\sigma-1}{s-d-1}q^{\sigma-s+d+1}\\
& \le & q^{\sigma-s+1}\sum_{d=0}^{s-1}\binom{s}{d} \frac{\sigma}{s-d} (q-1)^{s-d-1}\frac{(\sigma-1)^{s-d-1}}{(s-d-1)!}q^d\\
& = & s \sigma q^{\sigma-s+1} \sum_{d=0}^{s-1} \binom{s-1}{d} \frac{1}{(s-d) \cdot (s-d)!} [(q-1)(\sigma -1)]^{s-d-1} q^d.
\end{eqnarray*}
Now $(k+1) \cdot (k+1)! \ge 4^k$ for $k \ge 0$, and so $(s-d) \cdot (s-d)! \ge 4^{s-d-1}$ for $d=0,1,\ldots,s-1$. It follows that  
\begin{eqnarray*}
\Delta_q(s,\sigma)& \le & s \sigma q^{\sigma -s+1} \sum_{d=0}^{s-1} \binom{s-1}{d} \left[\frac{q-1}{4} (\sigma -1)\right]^{s-d-1} q^d\\
& = & s \sigma q^{\sigma -s+1} \left(\frac{q-1}{4} (\sigma -1)+q\right)^{s-1}\\
& = & s \sigma q^{\sigma} \left(\frac{q-1}{4q} (\sigma -1)+1\right)^{s-1},
\end{eqnarray*}
and so
$$
\Delta_q(s,\sigma) \le s \sigma q^{\sigma} \left(\frac{\sigma +3}{4}\right)^{s-1} \qquad \mbox{for } \sigma \ge 1.
$$
Now we use $s \ge 2$ and the special form of $\sigma_1$ to obtain
$$
\Delta_q(s,\sigma_1) < \sigma_1 q^{\sigma_1} (\sigma_1 +3)^{s-1} < mq^m m^{-s} m^{s-1} = q^m,
$$
and this yields the desired result. 
\end{proof}

We recall the definition of the \emph{star discrepancy} $D_N^*$ of any $N$ points $\by_1,\ldots,\by_N \in [0,1)^s$, namely
$$
D_N^*=\sup_J \left|\frac{Z(J)}{N} - \lambda_s(J) \right|,
$$
where the supremum is extended over all subintervals $J$ of $[0,1)^s$ with one vertex at the origin, where $Z(J)$ is the number of integers $n$ with $1 \le n \le N$
and $\by_n \in J$, and where $\lambda_s$ denotes the $s$-dimensional Lebesgue measure. Point sets with small star discrepancy are crucial ingredients of quasi-Monte
Carlo methods for numerical integration (see \cite[Chapter~2]{DP10}).

Using the well-known star discrepancy bound for $(t,m,s)$-nets in base $q$ (see \cite[Theorem~4.10]{N92}) together with Theorem \ref{thqp} and Corollary \ref{coex},
we arrive at the following result.  

\begin{coro} \label{coed}
Let $q$ be a prime power and let $s,m \in \NN$. Then there exists an $\bsalpha\in\FF_{q^m}^s$ such that the star discrepancy 
of the corresponding Vandermonde net satisfies 
$$D^*_{N}=\bigO_{q,s}\left(N^{-1} (\log N)^{2s-1}\right),$$ where $N=q^m$. 
\end{coro}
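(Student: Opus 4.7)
The plan is to chain together three ingredients already in hand: the existence result from Corollary~\ref{coex}, the quality-parameter bound from Theorem~\ref{thqp}, and the classical star-discrepancy estimate for digital $(t,m,s)$-nets.

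First, I would invoke Corollary~\ref{coex} to fix an $\bsalpha \in \FF_{q^m}^s$ satisfying $\varrho(\bsalpha) \ge \lfloor m - s\log_q m - 3 \rfloor$. Feeding this into Theorem~\ref{thqp}, the Vandermonde net generated by $\bsalpha$ is a digital $(t,m,s)$-net over $\FF_q$ with
$$
t \le m - \varrho(\bsalpha) \le s\log_q m + 4,
$$
so in particular $q^t \le q^{4} m^{s} = O_{q,s}(m^s)$.

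Next, I would apply the standard star-discrepancy bound for $(t,m,s)$-nets in base $q$ (see \cite[Theorem~4.10]{N92}), which asserts that for the $N = q^m$ points of such a net one has
$$
D_N^* \le B(s,q)\,\frac{q^{t}(\log N)^{s-1}}{N} + O_{q,s}\!\left(\frac{q^{t}(\log N)^{s-2}}{N}\right),
$$
where $B(s,q)$ is a constant depending only on $s$ and $q$. Since $m = \log_q N$, the estimate $q^t = O_{q,s}(m^s) = O_{q,s}((\log N)^s)$ established above gives
$$
D_N^* = O_{q,s}\!\left(\frac{(\log N)^s (\log N)^{s-1}}{N}\right) = O_{q,s}\!\left(\frac{(\log N)^{2s-1}}{N}\right),
$$
which is the claimed bound.

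There is essentially no obstacle here: the entire argument is a matter of substitution, with the quantitative work already absorbed into Corollary~\ref{coex}. The only point to watch is the trivial case $s=1$ (where Corollary~\ref{coex} gives $\varrho(\bsalpha)=m$, hence $t=0$), which fits the same framework and is in fact stronger than needed.
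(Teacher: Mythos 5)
Your argument is correct and is exactly the route the paper takes: the paper's (one-line) proof simply cites Corollary~\ref{coex}, Theorem~\ref{thqp}, and the star discrepancy bound \cite[Theorem~4.10]{N92}, and your write-up supplies the same chain with the bookkeeping $q^t \le q^4 m^s = O_{q,s}((\log N)^s)$ made explicit. Nothing to add.
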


\section{Further existence results for small discrepancy} \label{sec:disc}

Throughout this section, we assume that $q$ is a prime, that $\FF_q$ is identified with $\ZZ_q$, and that $\psi : \FF_q \to \ZZ_q$ is the identity map. 
Then we know from \cite[Theorem~5.34]{DP10} that the star 
discrepancy of a digital net generated by $C^{(1)},\ldots,C^{(s)} \in \FF_q^{m\times m}$ satisfies 
\begin{equation} \label{eqgd}
D^*_{q^m}\leq 1-\left(1-\frac{1}{q^m}\right)^s+R_q(C^{(1)},\ldots,C^{(s)}),
\end{equation}
where 
$$
R_q(C^{(1)},\ldots,C^{(s)}):=\sum_{(\bk_1,\ldots,\bk_s) \in F'} \rho_q^{(s)}(\bk_1,\ldots,\bk_s)
$$ 
with 
$$
F'=\left\{(\bk_1,\ldots,\bk_s): \bk_1C^{(1)}+\cdots+\bk_sC^{(s)}=\bszero\right\}\setminus\{\bszero\}.
$$
Here $\bk_i \in \FF_q^m$ for $1 \le i \le s$. 
Furthermore $\rho_q^{(s)}(\bk_1,\ldots,\bk_s):=\prod_{i=1}^s \rho_q(\bk_i)$, where for $\bk =(k_1,\ldots,k_m) \in \FF_q^m$ we put 
$$
\rho_q(\bk):=\left\{
\begin{array}{ll}
1& \mbox{if } \bk=\bszero,\\
\frac{1}{q^{r}\sin(\pi k_r/q)}& \mbox{if } \bk =(k_1,\ldots,k_r,0,\ldots,0), \, k_r \ne 0.
\end{array}
\right.
$$

\begin{lemma} \label{lerq}
Let $C^{(1)},\ldots,C^{(s)} \in \FF_q^{m\times m}$ be the generating matrices of the Vandermonde net corresponding to 
$\bsalpha=(\alpha_1,\ldots,\alpha_s)\in\FF_{q^m}^s$. Then 
$$
R_q(\bsalpha):=R_q(C^{(1)},\ldots,C^{(s)})=\sum_{\bsh\in D'_{q,m,\bsalpha}} \rho_q^{(s)}(\bsh),
$$
where for $\bsh \in H_{q,m}^* \times H_{q,m}^{s-1}$ we put $\rho_q^{(s)}(\bsh)=\rho_q(xh_1(x)) \rho_q(h_2) \cdots \rho_q(h_s)$. Here for $h\in H_{q,m}$ we define 
$$
\rho_q(h)=\left\{
\begin{array}{ll}
1& \mbox{if } h=0,\\
\frac{1}{q^{r}\sin(\pi k_r/q)}& \mbox{if } h=k_1x+\cdots + k_rx^r, \, k_r \ne 0.
\end{array}
\right.
$$
\end{lemma}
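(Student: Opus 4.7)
The plan is to set up an explicit bijection between the indexing sets $F'$ and $D'_{q,m,\bsalpha}$ that preserves the weights $\rho_q^{(s)}$, so that the two sums in question agree term by term.

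First I would unpack what $\bk_i C^{(i)} = \bszero$ means via the vector space isomorphism $\phi : \FF_{q^m} \to \FF_q^m$. For each $\bk_i = (k_{i,1},\ldots,k_{i,m}) \in \FF_q^m$, the row expansion gives $\bk_i C^{(i)} = \phi\!\left(\sum_{j=1}^m k_{i,j} \gamma_j^{(i)}\right)$ by $\FF_q$-linearity of $\phi$. Using the definition of the Vandermonde matrix, for $i=1$ I set $h_1(x) := \sum_{j=1}^m k_{1,j} x^{j-1} \in H_{q,m}^*$ so that $\sum_{j=1}^m k_{1,j} \gamma_j^{(1)} = h_1(\alpha_1)$, and for $2 \le i \le s$ I set $h_i(x) := \sum_{j=1}^m k_{i,j} x^j \in H_{q,m}$ (which has $h_i(0)=0$) so that $\sum_{j=1}^m k_{i,j} \gamma_j^{(i)} = h_i(\alpha_i)$. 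This gives a map $(\bk_1,\ldots,\bk_s) \mapsto (h_1,\ldots,h_s)$ which is clearly an $\FF_q$-linear bijection $\FF_q^m \times \cdots \times \FF_q^m \to H_{q,m}^* \times H_{q,m}^{s-1}$. Since $\phi$ is an isomorphism, $\sum_{i=1}^s \bk_i C^{(i)} = \bszero$ is equivalent to $\sum_{i=1}^s h_i(\alpha_i) = 0$, and the zero tuples correspond. Hence the bijection restricts to a bijection $F' \leftrightarrow D'_{q,m,\bsalpha}$.

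Next I would check that the weights match under this bijection. For $2 \le i \le s$, the coordinate vector $\bk_i = (k_{i,1},\ldots,k_{i,m})$ agrees with the coefficient sequence of $h_i(x) = k_{i,1}x + \cdots + k_{i,m}x^m$ in the order $x,x^2,\ldots,x^m$. Comparing the two definitions of $\rho_q$ (one for vectors in $\FF_q^m$, one for elements of $H_{q,m}$), both assign the same value $1/(q^r \sin(\pi k_{i,r}/q))$, where $k_{i,r}$ is the last nonzero entry, and both give $1$ when the object is zero. So $\rho_q(\bk_i) = \rho_q(h_i)$ for $i \ge 2$. For $i=1$, the vector $\bk_1 = (k_{1,1},\ldots,k_{1,m})$ equals the coefficient sequence of $xh_1(x) = k_{1,1}x + k_{1,2}x^2 + \cdots + k_{1,m}x^m$ in the same order, so by the same identification $\rho_q(\bk_1) = \rho_q(xh_1(x))$. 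This is precisely where the shift ``$xh_1$'' in the stated formula comes from, and is the one place in the argument that needs a little care.

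Putting these observations together, $\rho_q^{(s)}(\bk_1,\ldots,\bk_s) = \rho_q(xh_1(x))\rho_q(h_2) \cdots \rho_q(h_s) = \rho_q^{(s)}(\bsh)$ for every nonzero tuple, and summing the term-wise equality over the bijection $F' \leftrightarrow D'_{q,m,\bsalpha}$ yields the claimed identity. The main (minor) obstacle is just the bookkeeping distinction between the $i=1$ case, where rows use powers $\alpha_1^{j-1}$ starting at $j-1=0$, and the $i \ge 2$ case, where rows use $\alpha_i^j$ starting at $j=1$; this asymmetry forces the ``$xh_1(x)$'' in place of $h_1(x)$ when invoking $\rho_q$ on the first coordinate, but causes no difficulty.
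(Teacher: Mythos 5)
Your proof is correct and follows exactly the route the paper intends: the paper's own proof is the single sentence ``this follows immediately from the form of the generating matrices and the definition of $D'_{q,m,\bsalpha}$,'' and your argument simply spells out the weight-preserving bijection $F' \leftrightarrow D'_{q,m,\bsalpha}$ (including the correct handling of the $xh_1(x)$ shift coming from the exponent offset in the first component) that the authors leave implicit.
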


\begin{proof}
This follows immediately from the form of the generating matrices $C^{(1)},\ldots,C^{(s)}$ of a Vandermonde net and from the definition of $D'_{q,m,\bsalpha}$.
\end{proof}

\begin{lemma}\label{lem:3}
For every prime $q$ and every $v,m\in\NN$, we have 
$$\sum_{h\in  H^{}_{q,m}}\rho_q(h)\leq \left\{
\begin{array}{ll}
\frac{m}{2}+1 & \mbox{ if } q=2,\\
\left(\frac{2}{\pi} \log q +\frac{2}{5}\right)m+1& \mbox{ if } q>2,
\end{array} \right.$$
and 
$$\sum_{\bsh\in H^*_{q,m}\times H^{v-1}_{q,m}}\rho^{(v)}_q(\bsh)\leq \left\{
\begin{array}{ll}
\left(\frac{m}{2}+1\right)^v & \mbox{ if } q=2,\\
\left(\left(\frac{2}{\pi} \log q +\frac{2}{5}\right)m+1\right)^v & \mbox{ if } q>2.
\end{array} \right.$$
\end{lemma}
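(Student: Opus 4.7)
The plan is to reduce the multi-dimensional bound to the one-dimensional bound by a factorisation argument, and then to compute the one-dimensional sum explicitly in terms of a classical trigonometric sum.

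First I would handle the reduction. The map $h_1 \mapsto xh_1(x)$ is a bijection from $H_{q,m}^*$ onto $H_{q,m}$ (it simply shifts coefficients up by one degree), so
$$\sum_{h_1 \in H_{q,m}^*} \rho_q(xh_1(x)) = \sum_{h \in H_{q,m}} \rho_q(h).$$
Since the definition of $\rho_q^{(v)}$ in Lemma~\ref{lerq} factors as a product of the one-variable quantities $\rho_q(xh_1(x)),\rho_q(h_2),\ldots,\rho_q(h_v)$, interchanging sum and product gives
$$\sum_{\bsh \in H_{q,m}^* \times H_{q,m}^{v-1}} \rho_q^{(v)}(\bsh) = \left(\sum_{h \in H_{q,m}} \rho_q(h)\right)^{\!v},$$
so the second inequality follows from the first by taking $v$-th powers.

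Next I would compute the one-dimensional sum exactly by stratifying $H_{q,m}$ by effective degree $r := \deg^*(h)$. The zero polynomial contributes $\rho_q(0)=1$. For each $r$ with $1 \le r \le m$, a polynomial $h=k_1 x+\cdots+k_r x^r$ with $k_r\ne 0$ contributes $1/(q^r \sin(\pi k_r/q))$, while the lower coefficients $k_1,\ldots,k_{r-1}$ range freely over $\FF_q$, giving a factor $q^{r-1}$. Thus the total contribution of each degree stratum equals $\frac{1}{q}\sum_{k=1}^{q-1}\frac{1}{\sin(\pi k/q)}$, independent of $r$, and summing over $r=1,\ldots,m$ yields
$$\sum_{h \in H_{q,m}} \rho_q(h) = 1 + \frac{m}{q}\sum_{k=1}^{q-1}\frac{1}{\sin(\pi k/q)}.$$

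Finally I would bound the remaining trigonometric sum. For $q=2$ it reduces to $1/\sin(\pi/2)=1$, and the right-hand side becomes $1+m/2$, matching the first case. For $q>2$ one invokes the classical Niederreiter estimate
$$\frac{1}{q}\sum_{k=1}^{q-1}\frac{1}{\sin(\pi k/q)} \le \frac{2}{\pi}\log q + \frac{2}{5},$$
which is a standard result in the theory of digital nets and can be cited from \cite{N92} or \cite{DP10}. The main technical obstacle is this last estimate: the structural reductions above are essentially bookkeeping, whereas the trigonometric bound, although standard, requires a careful comparison of the sum with the divergent integral $\int \csc(\pi x)\,dx$ together with endpoint corrections in order to obtain the sharp leading constant $2/\pi$ and the precise additive term $2/5$.
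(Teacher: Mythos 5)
Your proposal is correct, and it is essentially the argument the paper relies on: the paper's proof of Lemma~\ref{lem:3} consists only of a citation to the proof of Lemma~3.13 in \cite{N92}, which carries out exactly your computation (factor the multi-dimensional sum via the bijection $h_1\mapsto xh_1(x)$, stratify the one-dimensional sum by leading degree to get $1+\frac{m}{q}\sum_{k=1}^{q-1}\csc(\pi k/q)$, and invoke the classical estimate $\frac{1}{q}\sum_{k=1}^{q-1}\csc(\pi k/q)\leq \frac{2}{\pi}\log q+\frac{2}{5}$ for $q>2$). Your write-up supplies the details the paper delegates to that reference, so no further changes are needed.
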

\begin{proof}
This follows from the proof of \cite[Lemma~3.13]{N92}. 
\end{proof}

\begin{theorem} \label{thes}
Let $q$ be a prime and let $s,m \in \NN$. Then there exists an $\bsalpha\in\FF_{q^m}^s$ such that the star discrepancy of 
the corresponding Vandermonde net satisfies 
$$
D^*_{q^m} < 1-\left(1-\frac{1}{q^m}\right)^s +\frac{m}{q^m}\left\{
\begin{array}{ll}
\left(\frac{m}{2}+1\right)^s & \mbox{ if } q=2,\\
\left(\left(\frac{2}{\pi} \log q +\frac{2}{5}\right)m+1\right)^s & \mbox{ if } q>2.
\end{array} \right..
$$
\end{theorem}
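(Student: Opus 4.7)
My plan is to prove Theorem~\ref{thes} by an averaging argument over $\bsalpha \in \FF_{q^m}^s$, mirroring the standard existence proofs for polynomial lattice point sets. The starting point is the inequality~\eqref{eqgd} combined with Lemma~\ref{lerq}, which reduces the task to exhibiting some $\bsalpha$ for which
$$R_q(\bsalpha) = \sum_{\bsh \in D'_{q,m,\bsalpha}} \rho_q^{(s)}(\bsh)$$
is strictly smaller than $\frac{m}{q^m} B(q,m,s)$, where $B(q,m,s)$ denotes the bound from Lemma~\ref{lem:3} (that is, $(m/2+1)^s$ when $q=2$ and $((\tfrac{2}{\pi}\log q + \tfrac{2}{5})m+1)^s$ when $q>2$).

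Next I would average $R_q(\bsalpha)$ over all $\bsalpha \in \FF_{q^m}^s$ and interchange the order of summation:
$$\frac{1}{q^{ms}}\sum_{\bsalpha \in \FF_{q^m}^s} R_q(\bsalpha) \;=\; \frac{1}{q^{ms}} \sum_{\bsh \in (H_{q,m}^* \times H_{q,m}^{s-1})\setminus\{\bszero\}} \rho_q^{(s)}(\bsh)\, N(\bsh),$$
where $N(\bsh)$ is the number of $\bsalpha \in \FF_{q^m}^s$ with $\sum_{i=1}^s h_i(\alpha_i)=0$. The key counting step is to show $N(\bsh) \le m\, q^{m(s-1)}$ whenever $\bsh \neq \bszero$: pick an index $j$ for which $h_j \neq 0$; since $h_j \in \FF_q[x]$ has degree at most $m$, for each of the $q^{m(s-1)}$ choices of the remaining components of $\bsalpha$ there are at most $m$ values of $\alpha_j \in \FF_{q^m}$ satisfying $h_j(\alpha_j) = -\sum_{i\neq j} h_i(\alpha_i)$.

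Combining this with Lemma~\ref{lem:3} applied to $v=s$ yields
$$\frac{1}{q^{ms}}\sum_{\bsalpha} R_q(\bsalpha) \;\le\; \frac{m}{q^m}\!\!\sum_{\bsh \neq \bszero}\!\!\rho_q^{(s)}(\bsh) \;\le\; \frac{m}{q^m}\bigl(B(q,m,s) - 1\bigr) \;<\; \frac{m}{q^m}\,B(q,m,s),$$
where the subtraction of $1$ comes from removing the $\bsh=\bszero$ term (which contributes $\rho_q^{(s)}(\bszero)=1$). By the pigeonhole principle, some $\bsalpha^* \in \FF_{q^m}^s$ satisfies $R_q(\bsalpha^*)$ at most this average, giving the strict bound required. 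Substituting into~\eqref{eqgd} then delivers the theorem.

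I expect the only genuinely delicate point to be the degree bound used in estimating $N(\bsh)$: one must remember that $h_1 \in H_{q,m}^*$ has $\deg(h_1) \le m-1$ while $h_i \in H_{q,m}$ for $i \ge 2$ satisfies $\deg(h_i) \le m$, and that although the coefficients live in $\FF_q$, the evaluation is in the extension field $\FF_{q^m}$, so the number of roots of a nonzero $h_j$ is still bounded by $\deg(h_j) \le m$. Everything else is bookkeeping: the strict inequality in the theorem comes precisely from discarding the $\bszero$ contribution in the sum estimated via Lemma~\ref{lem:3}.
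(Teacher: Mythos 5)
Your proposal is correct and follows essentially the same route as the paper: average $R_q(\bsalpha)$ over all $\bsalpha\in\FF_{q^m}^s$, interchange the sums, bound the number of solutions of $\sum_{i=1}^s h_i(\alpha_i)=0$ by $mq^{m(s-1)}$ for each nonzero $\bsh$, and finish with Lemma~\ref{lem:3} and the pigeonhole principle, exactly as in the paper. The only point worth polishing is the root-counting step when the chosen nonzero $h_j$ is a constant (possible only for $j=1$, since $h_i(0)=0$ forces nonzero $h_i$ with $i\ge 2$ to be non-constant): there you should pick a non-constant nonzero $h_j$ with $j\ge 2$ instead, or note that $A(\bsh)=0$ when the constant $h_1$ is the only nonzero entry.
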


\begin{proof}
We consider the average $M_{s,q,m}$ of $R_q(\bsalpha)$ over all $\bsalpha\in \FF_{q^m}^s$, that is, 
\begin{eqnarray*}
M_{s,q,m}&=&\frac{1}{q^{ms}}\sum_{\bsalpha\in \FF_{q^m}^s} R_q(\bsalpha)\\
&=& \frac{1}{q^{ms}} \sum_{\bsalpha\in \FF_{q^m}^s}\sum_{\bsh\in D'_{q,m,\bsalpha}} \rho_q^{(s)}(\bsh)\\
&=& \frac{1}{q^{ms}} \sum_{\bsh \in (H_{q,m}^* \times H_{q,m}^{s-1}) \setminus \{\bszero\}} A(\bsh) \rho_q^{(s)}(\bsh),
\end{eqnarray*}
where $A(\bsh)$ is the number of $\bsalpha =(\alpha_1,\ldots,\alpha_s) \in \FF_{q^m}^s$ such that $\sum_{i=1}^sh_i(\alpha_i)=0$. Now for every $\bsh \in
(H_{q,m}^* \times H_{q,m}^{s-1}) \setminus \{{\bf 0}\}$, $A(\bsh)$ is at most $mq^{m(s-1)}$. Hence
$$
M_{s,q,m} \le \frac{m}{q^{m}} \sum_{\bsh \in (H_{q,m}^* \times H_{q,m}^{s-1}) \setminus \{\bszero\}} \rho_q^{(s)}(\bsh) < \frac{m}{q^{m}} 
\sum_{\bsh \in H_{q,m}^* \times H_{q,m}^{s-1}} \rho_q^{(s)}(\bsh).
$$
The last sum can be bounded using Lemma \ref{lem:3}. The result of the theorem follows now from~\eqref{eqgd}. 
\end{proof}

In terms of the number $N=q^m$ of points, the bound on the star discrepancy $D_N^*$ in Theorem~\ref{thes} is of the form $D_N^*=O_s\left(N^{-1} (\log N)^{s+1}\right)$.

\section{An explicit construction} \label{seex}

In this section, $q$ is again an arbitrary prime power. For any dimension $s$ with $1 \le s \le q+1$ and any integer $m \ge 2$, we construct a Vandermonde $(t,m,s)$-net
over $\FF_q$ with the least possible quality parameter $t=0$. It is well known (see \cite[Corollary~4.21]{N92}) that for $m \ge 2$, a $(0,m,s)$-net in base $q$ 
cannot exist for $s \ge q+2$, and so our construction is best possible in terms of the dimension $s$.

Let $\theta \in \FF_{q^m}$ be a root of an irreducible polynomial over $\FF_q$ of degree $m \ge 2$. In the construction of Vandermonde nets in Section~\ref{sein},
we put $\alpha_1=\theta$ and (if $s \ge 2$) $\alpha_i=(\theta +c_i)^{-1}$ for $i=2,\ldots,s$, where $c_2,\ldots,c_s$ are distinct elements of $\FF_q$. Note that
$\theta +c_i \ne 0$ for $2 \le i \le s$ since $\theta \notin \FF_q$. Furthermore, the condition $s \le q+1$ guarantees that we can find $s-1$ distinct elements
$c_2,\ldots,c_s \in \FF_q$.

\begin{theorem} \label{thec}
Let $q$ be a prime power and let $s,m \in \NN$ with $s \le q+1$ and $m \ge 2$. Then the construction above yields a Vandermonde $(t,m,s)$-net over $\FF_q$ with $t=0$.
\end{theorem}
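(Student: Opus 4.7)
The plan is to apply Theorem~\ref{thqp}: since $\varrho(\bsalpha) \le m$ always holds, it suffices to establish $\varrho(\bsalpha) \ge m$, i.e., that no nonzero $\bsh = (h_1, \ldots, h_s) \in H_{q,m}^* \times H_{q,m}^{s-1}$ with $\deg^*(h_1) + \sum_{i=2}^s \deg(h_i) \le m-1$ satisfies $\sum_{i=1}^s h_i(\alpha_i) = 0$. Suppose for contradiction that such an $\bsh$ exists, and set $I = \{i \ge 2 : h_i \ne 0\}$ together with $e_i = \deg(h_i)$ for $i \in I$. If $I = \emptyset$, then $\bsh \ne \bszero$ forces $h_1 \ne 0$, and the assumed relation reads $h_1(\theta) = 0$ for a nonzero $h_1$ of degree at most $m-1$, contradicting the fact that $\theta$ is a root of an irreducible polynomial of degree $m$ over $\FF_q$.

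Assume therefore $I \ne \emptyset$. For each $i \in I$, writing $h_i(y) = \sum_{j=1}^{e_i} k_{i,j} y^j$ and substituting $y = (x + c_i)^{-1}$ yields $h_i((\theta + c_i)^{-1}) = g_i(\theta)/(\theta + c_i)^{e_i}$ with
\[
g_i(x) = \sum_{j=1}^{e_i} k_{i,j}(x + c_i)^{e_i - j} \in \FF_q[x],
\]
which is nonzero and of degree at most $e_i - 1$. Setting $Q(x) = \prod_{i \in I}(x + c_i)^{e_i}$ and clearing denominators, the relation $\sum_{i=1}^s h_i(\alpha_i) = 0$ becomes $P(\theta) = 0$, where
\[
P(x) := h_1(x) Q(x) + \sum_{i \in I} g_i(x) \prod_{j \in I,\, j \ne i}(x + c_j)^{e_j} \in \FF_q[x].
\]
A short degree count, splitting on whether $h_1 = 0$ (with the convention $\deg^*(0) = -1$) and using $\deg g_i \le e_i - 1$, gives $\deg P \le m-1$; since $\theta$ has minimal polynomial of degree $m$, this forces $P \equiv 0$ in $\FF_q[x]$.

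The final step is to turn $P \equiv 0$ into a contradiction. Dividing by $Q(x) \ne 0$ rewrites the identity as
\[
h_1(x) + \sum_{i \in I} \frac{g_i(x)}{(x + c_i)^{e_i}} = 0
\]
in the field of rational functions over $\FF_q$. Because $c_2, \ldots, c_s$ are pairwise distinct and $\deg g_i < e_i$ for each $i \in I$, the displayed sum is already in partial fraction form with polynomial part $-h_1$; by uniqueness of partial fraction decomposition this forces $h_1 = 0$ and each $g_i = 0$, contradicting $g_i \ne 0$ for $i \in I$.

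The main obstacle I anticipate is the tight degree bookkeeping in the middle step: when $h_1 = 0$ one has $\sum_{i \in I} e_i \le m$ rather than $m-1$, so the bound $\deg P \le m - 1$ only survives because each term in the sum loses one degree via the factor $g_i$ (degree $\le e_i - 1$ rather than $e_i$). Beyond that, the distinctness of the $c_i$ and the fact that $\theta \notin \FF_q$ (so $\theta + c_i \ne 0$) are the two structural ingredients that make the construction work and that account for the dimension restriction $s \le q + 1$.
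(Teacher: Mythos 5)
Your proof is correct and follows essentially the same route as the paper: the substitution $h_i((\theta+c_i)^{-1})=g_i(\theta)/(\theta+c_i)^{e_i}$ with $g_i(x)=\sum_j k_{i,j}(x+c_i)^{e_i-j}$, the clearing of denominators, and the degree count forcing $P\equiv 0$ are exactly the paper's argument (phrased via the figure of merit $\varrho(\bsalpha)$ and Theorem~\ref{thqp} rather than directly via Lemma~\ref{leed}). The only cosmetic difference is the last step, where you invoke uniqueness of partial fraction decomposition while the paper reduces the polynomial identity modulo $(x+c_r)^{e_r}$ and uses coprimality; these are the same argument.
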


\begin{proof}
We proceed by Lemma~\ref{leed}. The case $s=1$ is trivial by the definition of $\theta$, and so we can assume that $s \ge 2$. For any integers $d_1,\ldots,d_s \ge 0$
with $\sum_{i=1}^s d_i=m$, we show that the $m$ elements $\theta^j$, $0 \le j \le d_1-1$, and $(\theta +c_i)^{-j}$ for $1 \le j \le d_i$ and $2 \le i \le s$ are
linearly independent over $\FF_q$. Consider a linear dependence relation
$$
\sum_{j=0}^{d_1-1} e_{1j} \theta^j + \sum_{i=2}^s \sum_{j=1}^{d_i} e_{ij} (\theta +c_i)^{-j} =0
$$
with all $e_{1j},e_{ij} \in \FF_q$. Multiply by $\prod_{k=2}^s (\theta +c_k)^{d_k}$ and put
$$
p_1(x)=\sum_{j=0}^{d_1-1} e_{1j} x^j \in \FF_q[x] \ \ \mbox{and} \ \ p_i(x)=\sum_{j=1}^{d_i} e_{ij} (x+c_i)^{d_i-j} \in \FF_q[x]
$$
for $2 \le i \le s$. Then
\begin{equation} \label{eqld}
p_1(\theta) \prod_{k=2}^s (\theta +c_k)^{d_k} + \sum_{i=2}^s p_i(\theta) \prod_{k=2 \atop k \ne i}^s (\theta +c_k)^{d_k} =0.
\end{equation}
Assume that for some integer $r$ with $2 \le r \le s$ we have $p_r(x) \ne 0$. Then $d_r \ge 1$ and $\deg(p_r(x)) < d_r$. On the left-hand side of~\eqref{eqld}
we have a polynomial in $\theta$ of degree $< \sum_{i=1}^s d_i=m$, and so this polynomial is the zero polynomial. Thus, we get the polynomial identity
\begin{equation} \label{eqpd}
p_1(x) \prod_{k=2}^s (x+c_k)^{d_k} + \sum_{i=2}^s p_i(x) \prod_{k=2 \atop k \ne i}^s (x+c_k)^{d_k} =0
\end{equation}
in $\FF_q[x]$. By considering this identity modulo $(x+c_r)^{d_r}$, we obtain
$$
p_r(x) \prod_{k=2 \atop k \ne r}^s (x+c_k)^{d_k} \equiv 0 \ ({\rm mod} \ (x+c_r)^{d_r}).
$$
The product over $k$ on the left-hand side is coprime to the modulus, and so it follows that $(x+c_r)^{d_r}$ divides $p_r(x)$. But $\deg(p_r(x)) < d_r$, and so
we arrive at a contradiction. Therefore $p_i(x)=0$ for $2 \le i \le s$, and so~\eqref{eqpd} shows that $p_1(x)=0$. Hence all coefficients $e_{1j},e_{ij} \in \FF_q$
in the original linear dependence relation are equal to $0$.
\end{proof}

The fact that we can explicitly construct optimal Vandermonde $(t,m,s)$-nets over $\FF_q$ for all dimensions $s \le q+1$ represents an advantage over polynomial 
lattice point sets (see Remark~\ref{repl} for the latter point sets). Explicit constructions of good polynomial lattice point sets are known only for $s=1$ and $s=2$ 
(see \cite[p.~305]{DP10}), whereas for $s \ge 3$ one has to resort to search algorithms in order to obtain good $s$-dimensional polynomial lattice point sets.

\section{Component-by-component constructions} \label{secb}

As in Section \ref{sec:disc} we assume that $q$ is a prime, that $\FF_q$ is identified with $\ZZ_q$, and that $\psi : \FF_q \to \ZZ_q$ is the identity map. 
Therefore the discrepancy bound in \eqref{eqgd} as well as Lemmas~\ref{lerq} and~\ref{lem:3} are valid. 
In the following, we introduce two component-by-component search algorithms for good Vandermonde nets in arbitrarily high dimensions, in the spirit of the 
search algorithms introduced in~\cite{Ko} and~\cite{SloRez02} for good lattice point sets and in~\cite{DKPS05} for good polynomial lattice point sets. 

\begin{algorithm}\label{algo:1}{\rm Given a prime $q$ and $s,m\in\NN$. \\
1. Choose $\alpha_1\in\FF_{q^m}$ as a root of an irreducible polynomial over $\FF_q$ of degree $m$.\\
2. For $d\in\NN$ with $2\leq d\leq s$, assume that we have already constructed $\alpha_1,\ldots,\alpha_{d-1}\in\FF_{q^m}$. We find $\alpha_d\in\FF_{q^m}$ that 
minimizes the quantity $R_q((\alpha_1,\ldots,\alpha_{d-1},\alpha_d))$ as a function of $\alpha_d$. }
\end{algorithm} 

\begin{theorem}\label{thm:4}
Let $q$ be a prime and let $s,m\in\NN$. Suppose that $\bsalpha=(\alpha_1,\ldots,\alpha_s)$ is constructed according to Algorithm \ref{algo:1}. Then for all 
$d\in\NN$ with $1\leq d\leq s$ we have $$R_q((\alpha_1,\ldots,\alpha_d))\leq \frac{m}{q^m}\left\{ 
\begin{array}{ll}
\left(\frac{m}{2}+1\right)^d & \mbox{ if } q=2,\\
\left(\left(\frac{2}{\pi} \log q +\frac{2}{5}\right)m+1\right)^d & \mbox{ if } q>2.
\end{array}
\right.$$
\end{theorem}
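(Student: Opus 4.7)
The plan is a standard CBC-type induction on $d$, in the spirit of \cite{DKPS05}. Throughout, write $B$ for the common upper bound in Lemma~\ref{lem:3}, so the asserted inequality reads $R_q((\alpha_1,\ldots,\alpha_d))\le (m/q^m)B^d$.

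The base case $d=1$ is immediate from Step~1 of Algorithm~\ref{algo:1}: since $\alpha_1$ is a root of an irreducible polynomial of degree $m$, no nonzero $h_1\in H_{q,m}^*$ (which has degree $<m$) can vanish at $\alpha_1$. Hence $D'_{q,m,(\alpha_1)}$ is empty and $R_q((\alpha_1))=0$, so the bound holds trivially.

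For the inductive step, I would split the sum from Lemma~\ref{lerq} defining $R_q((\alpha_1,\ldots,\alpha_d))$ according to whether $h_d=0$ or $h_d\ne 0$. Since $\rho_q(0)=1$, the $h_d=0$ piece reproduces $R_q((\alpha_1,\ldots,\alpha_{d-1}))$ and is independent of $\alpha_d$; let $\theta(\alpha_d)$ denote the remaining piece. Algorithm~\ref{algo:1} minimizes $R_q((\alpha_1,\ldots,\alpha_d))$, hence equivalently $\theta$, over $\alpha_d$, and so the chosen $\alpha_d$ satisfies $\theta(\alpha_d)\le q^{-m}\sum_{\beta\in\FF_{q^m}}\theta(\beta)$. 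Swapping sums in this average and noting that for each fixed $\bsh$ with $h_d\ne 0$ the equation $h_d(\beta)=-\sum_{i<d}h_i(\alpha_i)$ has at most $\deg(h_d)\le m$ roots $\beta\in\FF_{q^m}$, I would arrive at
$$\theta(\alpha_d)\le \frac{m}{q^m}\Bigl(\sum_{\bsh'\in H_{q,m}^*\times H_{q,m}^{d-2}}\rho_q^{(d-1)}(\bsh')\Bigr)\Bigl(\sum_{h_d\in H_{q,m}\setminus\{0\}}\rho_q(h_d)\Bigr).$$
Applying the two inequalities of Lemma~\ref{lem:3} bounds the product by $B^{d-1}(B-1)=B^d-B^{d-1}$, and combining with the induction hypothesis yields the claim:
$$R_q((\alpha_1,\ldots,\alpha_d))\le \frac{m}{q^m}B^{d-1}+\frac{m}{q^m}(B^d-B^{d-1})=\frac{m}{q^m}B^d.$$

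The delicate point will be retaining the sharper estimate $\sum_{h_d\ne 0}\rho_q(h_d)\le B-1$, obtained by subtracting the explicit contribution $\rho_q(0)=1$ from the bound in Lemma~\ref{lem:3}; the crude bound $B$ would leave a positive residual of size $(m/q^m)B^{d-1}$ at each level and would only close the induction on something like $(m/q^m)(B+1)^d$ rather than the asserted $(m/q^m)B^d$. Isolating the $h_d=0$ contribution and exploiting the product structure of $\rho_q^{(d)}$ is what produces the telescoping identity $B^{d-1}+(B^d-B^{d-1})=B^d$ on which the argument hinges.
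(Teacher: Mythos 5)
Your proposal is correct and follows essentially the same route as the paper's proof: the same base case via irreducibility of the minimal polynomial of $\alpha_1$, the same splitting off of the $h_d=0$ terms, the same averaging of $\theta$ over $\beta\in\FF_{q^m}$ with the at-most-$m$-roots count, and the same telescoping $B^{d-1}+B^{d-1}(B-1)=B^d$ that the paper leaves implicit in its final sentence. Your observation that one must use $\sum_{h\ne 0}\rho_q(h)\le B-1$ rather than $B$ is exactly the point needed to close the induction.
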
 

\begin{proof}The proof is carried out by induction on $d$. For $d=1$ we have 
$$R_q((\alpha_1))=\sum_{h\in D'_{q,m,(\alpha_1)}}\rho_q(h)=0, $$
since $D'_{q,m,(\alpha_1)}$ is an empty set (note that $\alpha_1\in\FF_{q^m}$ is a root of an irreducible polynomial over $\FF_q$ of degree $m$ and therefore 
not a root of a nonzero polynomial $h\in H^*_{q,m}$). 

Suppose now that for some $1\leq d <s$, we have already constructed $(\alpha_1,\ldots,\alpha_d)\in\FF^d_{q^m}$ and the bounds in the theorem hold.
Then consider $(\alpha_1,\ldots,\alpha_d,\alpha_{d+1})$. We have 
\begin{eqnarray*}
R_q((\alpha_1,\ldots,\alpha_d,\alpha_{d+1}))&= & \sum_{(\bsh,h_{d+1})\in D'_{q,m,(\alpha_1,\ldots,\alpha_d,\alpha_{d+1})}}\rho_q^{(d)}(\bsh)\rho_q(h_{d+1})\\ 
&=& \sum_{\bsh\in D'_{q,m,(\alpha_1,\ldots,\alpha_d)}}\rho_q^{(d)}(\bsh) +\theta(\alpha_{d+1})\\
&=& R_q((\alpha_1,\ldots,\alpha_d)) +\theta(\alpha_{d+1}),
\end{eqnarray*}
where we split off the terms with $h_{d+1}=0$ and where
$$\theta(\alpha_{d+1})=\sum_{h_{d+1}\in {H_{q,m}}\setminus\{0\}} \rho_q(h_{d+1})
\sum_{{\bsh\in H^*_{q,m}\times H^{d-1}_{q,m}}\atop{(\bsh,h_{d+1})\in D'_{q,m,(\alpha_1,\ldots,\alpha_d,\alpha_{d+1})}}}\rho^{(d)}_q(\bsh).$$
Note that $\alpha_{d+1}$ is a minimizer of $R_q((\alpha_1,\ldots,\alpha_d,\cdot))$
and the only dependence on $\alpha_{d+1}$ is in $\theta$. Therefore $\alpha_{d+1}$ is a minimizer of $\theta$. We obtain 
\begin{eqnarray*}
\lefteqn{\theta(\alpha_{d+1})}\\
&\leq & \frac{1}{q^m}\sum_{\beta\in\FF_{q^m}}\theta(\beta)\\
&=& \frac{1}{q^m}\sum_{\beta\in\FF_{q^m}}\sum_{h_{d+1}\in {H_{q,m}}\setminus\{0\}} \rho_q(h_{d+1})
\sum_{{\bsh\in H^*_{q,m}\times H^{d-1}_{q,m}}\atop{(\bsh,h_{d+1})\in D'_{q,m,(\alpha_1,\ldots,\alpha_d,\beta)}}}\rho^{(d)}_q(\bsh)\\
&=& \frac{1}{q^m}\sum_{h_{d+1}\in {H_{q,m}}\setminus\{0\}} \rho_q(h_{d+1})\sum_{{\bsh\in H^*_{q,m}\times H^{d-1}_{q,m}}}\rho^{(d)}_q(\bsh)
\sum_{{\beta\in\FF_{q^m}}\atop{(\bsh,h_{d+1})\in D'_{q,m,(\alpha_1,\ldots,\alpha_d,\beta)}}}1.
\end{eqnarray*}
The condition $(\bsh,h_{d+1})\in D'_{q,m,(\alpha_1,\ldots,\alpha_d,\beta)}$ is equivalent to the equation $$h_{d+1}(\beta)=-\sum_{i=1}^dh_i(\alpha_i).$$
Since $h_{d+1}\in {H_{q,m}}\setminus\{0\}$, this equation has at most $m$ different solutions $\beta\in\FF_{q^m}$. 
Altogether we arrive at the bound
\begin{eqnarray*}
R_q((\alpha_1,\ldots,\alpha_d,\alpha_{d+1}))&\leq &  R_q((\alpha_1,\ldots,\alpha_d))\\
& & +\frac{m}{q^m}\sum_{h_{d+1}\in {H_{q,m}}\setminus\{0\}} \rho_q(h_{d+1})\sum_{{\bsh\in H^*_{q,m}\times H^{d-1}_{q,m}}}\rho^{(d)}_q(\bsh).
\end{eqnarray*}
The proof is completed by using the induction hypothesis and Lemma~\ref{lem:3}. 
\end{proof}

Theorem~\ref{thm:4} ensures that Algorithm~\ref{algo:1} produces vectors $\bsalpha\in\FF_{q^m}^s$ whose existence was guaranteed by Theorem~\ref{thes} in 
Section~\ref{sec:disc}. But Algorithm~\ref{algo:1} does not make use of the explicit construction in Section~\ref{seex} for low dimensions. The following 
algorithm suggests as initial values the explicitly constructed $\alpha_1,\ldots,\alpha_{q+1}$ of Section~\ref{seex} for a component-by-component procedure.

\begin{algorithm}\label{algo:2} {\rm Given a prime $q$ and $s,m\in\NN$ with $s>q+1$ and $m\geq 2$. \\
1. Choose $\alpha_1,\ldots,\alpha_{q+1}\in\FF_{q^m}$ as in the explicit construction of Section~\ref{seex}.\\
2. For $d\in\NN$ with $q+2\leq d\leq s$, assume that we have already constructed $\alpha_1,\ldots,\alpha_{d-1}\in\FF_{q^m}$. We find $\alpha_d\in\FF_{q^m}$ 
that minimizes the quantity $R_q((\alpha_1,\ldots,\alpha_{d-1},\alpha_d))$ as a function of $\alpha_d$. }
\end{algorithm} 

Although Algorithm~\ref{algo:2} starts from an (in the quality parameter point of view) optimal vector in $\FF_{q^m}^{q+1}$, one cannot be certain that 
Algorithm~\ref{algo:2} is competitive with  Algorithm~\ref{algo:1}. A straightforward generalization of the proof of Theorem~\ref{thm:4} would involve an 
upper bound for $R_q(C^{(1)},\ldots,C^{(q+1)})$, where $C^{(1)},\ldots,C^{(q+1)}$ are the generating matrices of a $(0,m,q+1)$-net over $\FF_q$. However,
the known bound for $R_q(C^{(1)},\ldots,C^{(q+1)})$ in \cite[Theorem~4.34]{N92} is not strong enough for all settings. Unfortunately, particularly for large 
values of $q$, one will obtain a weaker bound than in Theorem~\ref{thm:4}. It will be an interesting project for the future to implement Algorithms~\ref{algo:1}
and~\ref{algo:2} and compare their performance.

\subsection*{Acknowledgments}
R. Hofer was partially supported by the Austrian Science Foundation (FWF), Projects P21943 and P24302. Furthermore, she would like to thank Peter Hellekalek 
and the University of Salzburg, Austria, for their great hospitality during the research visit where parts of this paper were written.

\vspace{1cm}
\noindent{Roswitha Hofer, Institute of Financial Mathematics, University of Linz, Altenbergerstr.
69, A-4040 Linz, Austria; email: roswitha.hofer@jku.at} \\

\noindent{Harald Niederreiter, Johann Radon Institute for Computational and Applied
Mathematics, Austrian Academy of Sciences, Altenbergerstr. 69, A-
4040 Linz, Austria, and Department of Mathematics, University of Salzburg,
Hellbrunnerstr. 34, A-5020 Salzburg, Austria; email: ghnied@gmail.com}

\end{document}